\newtheorem{lemma}{Lemma}
\newtheorem{theorem}{Theorem}
\newtheorem{corollary}{Corollary}
\newtheorem{definition}{Definition}
\newtheorem{remark}{Remark}
\newtheorem{question}{Question}
\title{\Large \scshape \textbf{Proper base change over henselian pairs}}
\author{\large \scshape Massimo Pippi}
\date{}
\begin{document}
\maketitle
\begin{center}
{\footnotesize \scshape Université Paul Sabatier}\\
{\footnotesize \scshape UMR 5219 du CNRS}\\
{\footnotesize \scshape Institut de Mathématiques de Toulouse}\\
{\footnotesize \scshape 118 route de Narbonne}\\
{\footnotesize \scshape F-31062 TOULOUSE Cedex 9}\\
{\footnotesize \itshape E-mail address:} {\footnotesize Massimo.Pippi@math.univ-toulouse.fr}\\
{\footnotesize \scshape ORCID ID: 0000-0002-5660-3156}\\
{\scriptsize The author is supported by the NEDAG PhD grant ERC-2016-ADG-741501}\footnote{the content of this note was obtained before the author started to receive the above mentioned scholarship, but it was organized in the present manuscript afterwards.}\\
{\footnotesize \itshape  Mathematics Subject Classification:} {\footnotesize 14F20}
\begin{minipage}{0.9\textwidth}
\vspace{0.5cm}
{\footnotesize{\scshape Abstract.}
We discuss a question which appears in [\textit{S\'eminaire de G\'eom\'etrie Alg\'ebrique du Bous Marie- Th\'eorie des topos et cohomologie \'etale des sch\'emas, Exposé XII, Remarks 6.13}] concerning proper base change. In particular, we propose a solution in a particular non-affine case.}
\end{minipage}
\end{center}
\section{Introduction}
The question we would like to answer is the following one\footnote{for the notation and the exact definition of the objects that are involved we refer to the original source.}: 
\begin{question}\label{conj1}
Let $(X,X_0)$ be an henselian couple (in the sense of \textnormal{Definition \ref{definition henselian couple}}). Is it true that \it{(ii)} and \it{(iii)} in \textnormal{\cite[Exposé XII, Proposition 6.5]{SGA4}} hold with $\mathbb{L}=\mathbb{P}$ and for every $n$?
\end{question}
This question appears in \cite[Exposé XII, Remarks 6.13]{SGA4}.  

 
We can restate it as follows:
\begin{question}\label{conj2}
Let $(X,X_0)$ be an henselian couple. Is it true that
\begin{enumerate}
\item[a.] the base change functor induces an equivalence between the category of étale coverings of $X$ and the category of étale coverings of $X_0$?
\item[b.]for any torsion étale sheaf $\mathscr{F}$ and for any integer $n$, the morphism
\[
H^n(X,\mathscr{F})\longrightarrow H^n(X_0,\mathscr{F}_{|X_0})
\]
is an isomorphism?
\end{enumerate}
\end{question}

\begin{remark}
\begin{enumerate}
\item When $X$ is proper and finitely presented over an henselian ring $(A,m)$ and $X_0=X\times_{Spec(A)} Spec(A/m)$, we know that the answer to \textnormal{Question \ref{conj1}} is affirmative. This is the proper base change theorem in étale cohomology.
\item The case $(X,X_0)=(Spec(A),Spec(A/I))$\footnote{an affine henselian couple is the same thing as an henselian pair. See Remark \ref{henselian pairs are affine henselian couples}.}, for $(A,I)$ an henselian pair was studied an solved by R. Elkik in \textnormal{\cite{Elk}} and by O. Gabber in \textnormal{\cite{Gab}}.
\end{enumerate}
\end{remark}
We propose a solution in the following situation:\\ 

$(\dagger)$ \hspace{0.5cm}{\itshape Let $X$ be proper over a noetherian affine scheme $Spec(A)$ and $X_0=X\times_{Spec(A)} Spec(A/I)$ for some ideal $I\subseteq A$.}\\
 
We will see that, under these assumptions, $(X,X_0)$ is an henselian couple for which Question \ref{conj1} has a positive answer. To achieve this, we will first generalize \cite[Theorem 3.1]{Art69} to the following form: 
\begin{theorem}\label{proper base change over henselian pairs}
Let $(A,I)$ be an henselian pair. Let $S=Spec(A)$ and let $f:X\longrightarrow S$ be a proper finitely presented morphism. Let $X_0=X\times_S S_0$, where $S_0=Spec(A/I)$. Then
\[
\text{É}t_f(X)\longrightarrow \text{É}t_f(X_0)
\]
\[
Z\mapsto Z\times_S S_0
\]
is an equivalence of categories.
\end{theorem}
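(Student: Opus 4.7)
My plan is to adapt Artin's approach for the local henselian case (\cite{Art69}, Theorem 3.1) to the henselian pair setting. The argument proceeds in three steps, following the pattern of using formal geometry together with an approximation result.

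\emph{Step 1 (Reduction to $A$ noetherian).} Every henselian pair is a filtered colimit of henselian pairs obtained by henselizing pairs of finite type over $\mathbb{Z}$; by the finite presentation of $f$, the morphism $X \to S$ descends to some noetherian stage. Since finite étale covers commute with such filtered colimits on both sides, it suffices to treat the noetherian case, which I assume from now on.

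\emph{Step 2 (Passage to the completion).} Let $\hat{A}$ be the $I$-adic completion of $A$, set $Y := X \times_S \mathrm{Spec}(\hat{A})$, and let $\hat{Y}$ denote the formal completion of $Y$ along $Y_0 = X_0$. The base change functor factors as
\[
\text{É}t_f(X) \xrightarrow{\alpha} \text{É}t_f(Y) \xrightarrow{\beta} \text{É}t_f(\hat{Y}) \xrightarrow{\gamma} \text{É}t_f(X_0).
\]
The functor $\gamma$ is an equivalence by topological invariance of the étale site: a finite étale cover of $\hat{Y}$ is a compatible system of finite étale covers of the nilpotent thickenings of $X_0$, each of which is uniquely determined by its restriction to $X_0$. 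The functor $\beta$ is an equivalence by Grothendieck's existence theorem applied to the coherent algebras $\pi_*\mathcal{O}_Z$ underlying finite étale covers $\pi:Z\to Y$, using that $Y$ is proper over the complete noetherian ring $\hat{A}$.

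\emph{Step 3 (Descent from $\hat{A}$ to $A$).} It remains to show $\alpha$ is an equivalence. For essential surjectivity, I would invoke Artin's approximation theorem — available for henselian pairs via the work of Popescu and Elkik — applied to the finitely presented datum of a finite étale cover of $Y$, to produce a finite étale $\tilde{Z}\to X$ whose pullback to $Y$ is isomorphic to the given one. For fully faithfulness, given finite étale $Z, Z' \to X$, a morphism $Z \to Z'$ over $X$ corresponds to a section of the finite étale cover $Z \times_X Z' \to Z$. Sections of such a cover correspond to certain idempotents in $\Gamma(Z \times_X Z', \mathcal{O})$, which is a finite $A$-algebra by coherence of proper pushforward; Hensel's lemma for the henselian pair $(A,I)$ and its finite extensions then produces the required bijection between sections over $X$ and over $X_0$, after checking compatibility via a degree argument.

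\emph{Main obstacle.} The hardest point is the essential surjectivity of $\alpha$: descending a finite étale cover of $Y$ to one over $X$. Classical Artin approximation requires $A$ to be excellent, whereas the theorem is stated for arbitrary noetherian henselian pairs. One must carefully appeal to Popescu's theorem or Elkik's lifting theorem for smooth algebras over henselian pairs (noting that finite étale covers are smooth), and the main technical labor lies in packaging a global finite étale cover of $Y$ as a solution to a finite system of equations over $\hat{A}$ to which approximation can be applied.
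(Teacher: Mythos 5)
Your proposal is correct and its engine is the same as the paper's: reduce by a limit argument to the case where $(A,I)$ is the henselization of a pair of finite type over $\mathbb{Z}$; pass to the $I$-adic completion, where Grothendieck existence plus topological invariance (this composite is \cite[Theorem 18.3.4]{EGA4.4}, which the paper cites directly) identifies finite étale covers of $\hat{X}$ with those of $X_0$; and descend to $A$ by Artin approximation in the Popescu-powered form valid for henselizations of G-ring pairs. One caution on Step 1: it does not ``suffice to treat the noetherian case'' --- a general noetherian henselian pair need not satisfy the approximation property, and the point of the limit reduction is precisely to land in the class where $A\to\hat{A}$ is regular (henselization of a finitely generated $\mathbb{Z}$-algebra pair); you acknowledge this in your closing paragraph, but the logic of Steps 1 and 3 must be wired together so that the reduction retains this extra structure rather than just noetherianness. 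Where you genuinely diverge is full faithfulness: the paper encodes a morphism $X'\to X''$ as a degree-one connected component of $X'\times_X X''$ via its graph and then proves a connectedness lemma ($\pi_0(X)\cong\pi_0(X_0)$ for proper $X$) whose nontrivial direction is deduced from the already-established essential surjectivity; you instead encode morphisms as sections of the finite étale cover $Z\times_X Z'\to Z$, i.e.\ as suitable idempotents of the finite $A$-algebra $\Gamma(Z\times_X Z',\mathcal{O})$, and invoke the idempotent-lifting property defining henselian pairs. Your route is more self-contained for this half (it does not feed approximation back into full faithfulness), but it hides real work in the phrase ``certain idempotents'': you must identify $\pi_0$ of the special fibre $W_0$ with $\pi_0$ of $\mathrm{Spec}(\Gamma(W,\mathcal{O})\otimes_A A/I)$, which requires a Stein-factorization/connectedness-of-fibres argument since $\Gamma(W_0,\mathcal{O})$ need not equal $\Gamma(W,\mathcal{O})\otimes_A A/I$, and you must then run the degree argument (every component of $Z$ meets $Z_0$ because $I$ lies in the Jacobson radical and $Z$ is proper over $\mathrm{Spec}(A)$) to see that a lifted clopen subset still maps isomorphically. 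With those two points filled in, both halves of your argument go through.
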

Here \textit{É}$t_f(W)$ denotes the category of finite étale schemes over $W$. The key tools for the proof are Artin's approximation theory and \cite[Tag 0AH5]{SP}, which combined with \cite[Corollary 1.8]{Art69} yields the following theorem
\begin{theorem}\label{second answer to Artin's first question}
Let $(A,I)$ be an henselian pair with $A$ noetherian. Let $\hat{A}$ be the $I$-adic completion of $A$ and assume that one of the following hypothesis is satisfied:
\begin{enumerate}
\item $A\longrightarrow \hat{A}$ is a regular ring map;
\item $A$ is a G-ring;
\item $(A,I)$ is the henselization\footnote{here the henselization is the left adjoint to the inclusion functor \textit{Henselian Pairs}$\longrightarrow$ \textit{Pairs}} of a pair $(B,J)$, where $B$ is a noetherian G-ring.
\end{enumerate} 
Let $\mathscr{F}$ be a functor which is locally of finite presentation\footnote{see \cite[Definition 1.5]{Art69} } 
\[
\textit{A-algebras}\longrightarrow \textit{Sets}
\]
Given any $\hat{\xi}\in \mathscr{F}(\hat{A})$ and any $N\in \mathbb{N}$, there exists an element $\xi \in \mathscr{F}(A)$ such that
\[
\xi \equiv \hat{\xi} \text{ } mod \text{ }I^N
\]
i.e. $\xi$ and $\hat{\xi}$ have the same image in $\mathscr{F}(A/I^N)\cong \mathscr{F}(\hat{A}/\hat{I}^N)$
\end{theorem}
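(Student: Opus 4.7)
The plan is to reduce hypotheses (2) and (3) to (1), and then under (1) to combine Popescu's desingularization theorem (the henselian-pair version of which appears in \cite[Tag 0AH5]{SP}) with the smooth lifting property for henselian pairs, using \cite[Corollary 1.8]{Art69} as the underlying approximation input.

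For the reductions: (2) $\Rightarrow$ (1) is standard, since a noetherian G-ring has regular formal fibres, so $A \to \hat A$ is regular. For (3) $\Rightarrow$ (2), one uses the preservation of the G-ring property under henselization of a noetherian G-ring pair $(B,J)$; this preservation is tightly bound up with the compatibility of henselization and completion ($\hat A \cong \hat B$ when $(A,I) = (B,J)^h$), which is the substance of the cited Stacks Project reference. In either case, one finally arrives at the hypothesis that $A \to \hat A$ is regular.

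Under hypothesis (1), Popescu's desingularization writes $\hat A = \varinjlim_{\lambda} B_\lambda$ as a filtered colimit of smooth $A$-algebras. Because $\mathscr{F}$ is locally of finite presentation, $\mathscr{F}(\hat A) = \varinjlim_\lambda \mathscr{F}(B_\lambda)$, so there exist a smooth $A$-algebra $B$, an element $\eta \in \mathscr{F}(B)$, and an $A$-algebra map $\varphi : B \to \hat A$ with $\varphi_*(\eta) = \hat\xi$. Post-composing $\varphi$ with the quotient $\hat A \twoheadrightarrow \hat A/I^N\hat A = A/I^N$ gives an $A$-algebra map $\bar{s} : B \to A/I^N$. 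Since $(A,I)$ and $(A,I^N)$ define the same henselian pair (they share the same henselization), and $B$ is smooth over $A$, the smooth lifting property for henselian pairs, as established in \cite{Elk} and \cite{Gab}, produces an $A$-algebra lift $s : B \to A$ with $s \equiv \bar{s} \pmod{I^N}$. Setting $\xi := \mathscr{F}(s)(\eta) \in \mathscr{F}(A)$ then yields $\xi \equiv \hat\xi \pmod{I^N}$ by functoriality.

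I expect the main obstacle to be the reduction (3) $\Rightarrow$ (2): the G-ring property is local in nature, and its preservation under henselization along an arbitrary (non-maximal) ideal requires careful bookkeeping of formal fibres through both the henselization and completion operations. This is precisely the role played by \cite[Tag 0AH5]{SP}. Once the reductions are in hand, the Popescu-and-lift argument is by now a standard feature of modern Artin approximation, and the interface with \cite[Corollary 1.8]{Art69} amounts to the observation that smooth-over-henselian sections exist at the level needed.
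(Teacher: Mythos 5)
Your proof is correct and follows essentially the same route as the paper, which obtains the theorem by citing \cite[Tag 0AH5]{SP} (whose proof is precisely your Popescu-plus-smooth-lifting argument, run for systems of polynomial equations, together with the reductions $3.\Rightarrow 2.\Rightarrow 1.$) combined with the formal upgrade from equations to functors locally of finite presentation in \cite[Corollary 1.8]{Art69}. By descending $\hat{\xi}$ through Popescu's colimit and applying the smooth lifting property directly to $\mathscr{F}$, you merely inline that upgrade; the mathematical content is the same.
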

\begin{remark}
In \textnormal{Theorem \ref{second answer to Artin's first question}} we have that $\textit{3.} \Rightarrow \textit{2.} \Rightarrow \textit{1.}$ See \textnormal{\cite[Tag 0AH5]{SP}}.
\end{remark}
\begin{remark}\label{first step in the solution}
\textnormal{Theorem \ref{proper base change over henselian pairs}}, joined with \textnormal{\cite[Corollary 1]{Gab}}, gives us a positive answer to \textnormal{Question \ref{conj1}} when $(X,X_0)$ is proper and finitely presented over an henselian pair. Moreover, we will see how we can always reduce to this case from situation $(\dagger)$.
\end{remark}

\section{Proof of Theorem \ref{proper base change over henselian pairs}}
This proof is an adaption of the one given in the local case by Artin (see \cite[Theorem 3.1]{Art69}). This generalization is possible thanks to Popescu's characterization of regular morphisms between noetherian rings, which provides us Theorem \ref{second answer to Artin's first question} as a corollary.\\
First we reduce to the case where $A$ is the henselization of a  finitely presented $\mathbb{Z}$-algebra. in order to do this, we need the following two preliminary lemmas.
\begin{lemma}\label{finite étale covers modulo iso functor locally of finite presentation}
Let $S=Spec(A)$ and let $g:X \longrightarrow S$ be a proper morphism of finite presentation. Then the functor
\[
\mathscr{F} : \textit{A-Algebras}\longrightarrow \textit{Sets}
\]
\[
B \mapsto \{\text{finite étale coverings of } Spec(B)\times_S X\}/\text{isomorphism}
\]
is locally of finite presentation.
\end{lemma}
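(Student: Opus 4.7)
The plan is to verify the definition of \emph{locally of finite presentation} directly (\cite[Definition 1.5]{Art69}): for every filtered inductive system $\{B_\lambda\}$ of $A$-algebras with colimit $B=\varinjlim B_\lambda$, one must show that the canonical map
\[
\varinjlim_\lambda \mathscr{F}(B_\lambda)\longrightarrow \mathscr{F}(B)
\]
is a bijection.

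The idea is to translate this into the standard ``limits of schemes'' machinery. Set $X_\lambda := X\times_S Spec(B_\lambda)$ and $X_B := X\times_S Spec(B)$. Since $Spec(B)=\varprojlim_\lambda Spec(B_\lambda)$ and base change commutes with cofiltered limits of schemes, $X_B = \varprojlim_\lambda X_\lambda$. The transition maps $X_\mu \to X_\lambda$ are affine (they are base changes of the affine maps $Spec(B_\mu)\to Spec(B_\lambda)$), and each $X_\lambda$ is quasi-compact and quasi-separated because $g$ is of finite presentation. This puts us exactly in the setting in which the limit theorems of \cite[Tag 01ZM]{SP} and the related tags on descent of properties of morphisms apply.

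For surjectivity, let $Y\to X_B$ be a finite étale covering. It is in particular of finite presentation, so by the limit result for finitely presented schemes (\cite[Tag 01ZM]{SP}) it descends to a finitely presented $Y_\lambda \to X_\lambda$ for some $\lambda$. Since the properties ``finite'' and ``étale'' descend along such cofiltered systems with affine transition maps, after enlarging $\lambda$ we may assume $Y_\lambda \to X_\lambda$ is finite étale, which yields a class in $\mathscr{F}(B_\lambda)$ mapping to $[Y]$. For injectivity, if $Y_\lambda, Y'_\lambda \to X_\lambda$ are finite étale covers whose pullbacks to $X_B$ are $X_B$-isomorphic, then the isomorphism is a morphism of finitely presented $X_B$-schemes and therefore descends (by the same limit result) to a morphism $Y_\mu \to Y'_\mu$ over $X_\mu$ for some $\mu\ge \lambda$; enlarging $\mu$ once more, we may assume this morphism is itself an isomorphism, so $[Y_\lambda]=[Y'_\lambda]$ in $\varinjlim \mathscr{F}(B_\lambda)$.

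There is no real obstacle in this argument: it is the standard translation between a categorical property of the functor $\mathscr{F}$ and the ``limits of quasi-compact quasi-separated schemes'' formalism. The only care required is bookkeeping, to ensure that at each stage the relevant property (finite presentation, finiteness, étaleness, being an isomorphism) is descended by passing, if necessary, to a larger index in the filtered system.
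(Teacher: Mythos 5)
Your proof is correct and is essentially the argument the paper relies on: the paper simply defers to the opening of Artin's proof of \cite[Theorem 3.1]{Art69}, which establishes this fact by exactly the same limit formalism (descent of finitely presented, finite, étale morphisms and of isomorphisms along filtered colimits of rings, i.e.\ EGA IV 8.8.2, 8.10.5 and 17.7.8). Your write-up just makes explicit the bookkeeping that the cited source carries out.
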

\begin{proof}
See the beginning of the proof of \textnormal{\cite[Theorem 3.1]{Art69}}.
\end{proof}

\begin{lemma}\label{hom locally of finite presentation}
Let $S=Spec(A)$ and let $g:X \longrightarrow S$ be a proper morphism of finite presentation. Let $Z_1\rightarrow X$ and $Z_2\rightarrow X$ be two finite étale covers of $X$. Then the functor
\[
\mathscr{G}: \textit{A-algebras}\longrightarrow \textit{Sets}
\]
\[
B\mapsto Hom_{X\times_S Spec(B)}(Z_1\times_S Spec(B),Z_2\times_S Spec(B))
\]
is locally of finite presentation.
\end{lemma}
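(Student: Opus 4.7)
The plan is to verify directly that $\mathscr{G}$ commutes with filtered colimits of $A$-algebras, which is Artin's definition of being locally of finite presentation. Let $B = \mathrm{colim}_{i} B_i$ be a filtered colimit of $A$-algebras; the goal is to produce a bijection $\mathrm{colim}_i \mathscr{G}(B_i) \to \mathscr{G}(B)$.

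First I would translate the statement into scheme-theoretic language. Applying $\mathrm{Spec}$ turns the colimit of rings into a cofiltered inverse limit of affine schemes $\mathrm{Spec}(B) = \lim_i \mathrm{Spec}(B_i)$ with affine transition morphisms, and base changing along $X \to S$ preserves this structure, giving $X_B = \lim_i X_{B_i}$ with affine transitions $X_{B_j}\to X_{B_i}$. Because $Z_1 \to X$ and $Z_2 \to X$ are finite étale, they are in particular of finite presentation, so the base changes $Z_{k, B_i}$ are of finite presentation over the corresponding $X_{B_i}$.

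Second, I would invoke the standard limit theorem for morphisms between finitely presented schemes (EGA IV.8.8.2): for a filtered inverse system of schemes $T = \lim_i T_i$ with affine transitions and schemes $U, V$ of finite presentation over some $T_{i_0}$, the canonical map
\[
\mathrm{colim}_{i \geq i_0} \mathrm{Hom}_{T_i}(U_{T_i}, V_{T_i}) \longrightarrow \mathrm{Hom}_T(U_T, V_T)
\]
is bijective. Applied with $T_i = X_{B_i}$, $U = Z_{1, B_{i_0}}$, and $V = Z_{2, B_{i_0}}$, this gives exactly the required identification $\mathscr{G}(B) = \mathrm{colim}_i \mathscr{G}(B_i)$.

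No step is expected to present a serious obstacle: the argument is a direct application of EGA IV.8 limit theory and runs parallel to (and is slightly simpler than) the reasoning Artin sketches for the functor of isomorphism classes of finite étale covers in the proof of \cite[Theorem 3.1]{Art69}. The only points requiring verification are the finite-presentation hypotheses on $Z_1, Z_2$ and the affineness of the transition morphisms, both of which follow immediately from the assumptions of the lemma.
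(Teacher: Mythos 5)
Your proposal is correct and is essentially the paper's own argument: the paper proves this lemma by directly citing EGA IV, Théorème 8.8.2(i), which is exactly the limit theorem you apply to the system $X_{B_i}$ after checking the finite-presentation and quasi-compactness hypotheses. Your write-up just makes explicit the verification that the paper leaves implicit.
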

\begin{proof}
The lemma is a straightforward consequence of \cite[Theorem 8.8.2.(i)]{EGA4.3}.
\end{proof}
Let $(A,I)$ be an henselian pair and write $A$ as a direct limit $\varinjlim A_i$, where each $A_i$ is a subalgebra of $A$ that is finitely generated over $\mathbb{Z}$. Let $(A_i^h,(I\cap A_i)^h)$ be the henselization of $(A_i,(I\cap A_i))$ for each $i$. Then by \cite[Chapter XI, Proposition 2]{Ray} $\varinjlim (A_i^h,(I\cap A_i)^h)$ is an henselian pair. It is easy to see that 
\[
(A,I)=\varinjlim (A_i^h,(I\cap A_i)^h)
\]
Write $S_i=Spec(A_i^h)$ for every index $i$. Then
\[
S=\varprojlim S_i
\]
By \cite[Thereom 8.8.2. (ii)]{EGA4.3} we know that $X$ comes from a finitely presented scheme $X_{i_0}$ for some index $i_0$, i.e. $X\cong X_{i_0}\times_{S_{i_0}}S$. Moreover, by \cite[Theorem 8.10.5]{EGA4.3}, we can assume that $X_{i_0}$ is also proper over $S_{i_0}$. As the functor
\[
\mathscr{F}: A^h_{i_0}-Algebras\longrightarrow Sets
\]
\[
B \mapsto \{\text{finite étale coverings of } Spec(B)\times_{S_{i_0}} X_{i_0}\}/\text{isomorphism}
\]
is locally of finite presentation, we have that
\[
\mathscr{F}(A)=\varinjlim \mathscr{F}(A^h_i)
\]
Therefore, every finite étale cover of $X$ comes from a finite étale cover of $X_i=S_i\times_{S_{i_0}}X_{i_0}$
for a suitable index $i$.
\begin{remark}
All schemes $X_{i_0}\times_{S_{i_0}}S_i$  and $X\cong X_{i_0}\times_{S_{i_0}}S$ are quasi-compact and quasi-separated, as they are proper over affine schemes. 
\end{remark}
Let $Z\rightarrow X$ and $W\rightarrow X$ be two finite étale covers of $X$. Then we can assume without loss of generality that they come from two finite étale covers $Z_{i_0}\rightarrow X_{i_0}$, $W_{i_0}\rightarrow X_{i_0}$. Then by Lemma \ref{hom locally of finite presentation} we see that
\[
\varinjlim Hom_{X_i}(Z_i,W_i)=Hom_X(Z,W)
\]
It is then clear that we can reduce the proof of Theorem \ref{proper base change over henselian pairs} to the case where $(A,I)$ is the henselization of a pair $(B,J)$, where $B$ is finitely generated over $\mathbb{Z}$. In particular, $B$ is a G-ring and Theorem \ref{second answer to Artin's first question} holds.

\begin{lemma}\label{essential surjectivity}
The functor in Theorem \ref{proper base change over henselian pairs} is essentially surjective.
\end{lemma}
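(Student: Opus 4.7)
I will adapt Artin's strategy from the local case \cite[Theorem 3.1]{Art69}. Given a finite étale cover $Z_0\to X_0$, the plan is: (i) lift it to a compatible system of finite étale covers of the $I$-adic thickenings of $X_0$; (ii) algebraize this system via Grothendieck's existence theorem to produce a finite étale cover over $X\times_S\operatorname{Spec}(\hat{A})$; and (iii) descend from $\hat{A}$ to $A$ by applying the Artin approximation statement of Theorem \ref{second answer to Artin's first question} to the functor $\mathscr{F}$ of Lemma \ref{finite étale covers modulo iso functor locally of finite presentation}.

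For step (i), setting $X_n:=X\times_S\operatorname{Spec}(A/I^{n+1})$, the topological invariance of the small étale site along the nilpotent closed immersions $X_0\hookrightarrow X_n$ produces a unique compatible tower of finite étale covers $Z_n\to X_n$ restricting to $Z_0$. The reduction already carried out places us in the situation where $A$ is the henselization of a finitely generated $\mathbb{Z}$-algebra, hence noetherian, so $\hat{A}$ is noetherian and $I\hat{A}$-adically complete and $X\times_S\operatorname{Spec}(\hat{A})\to\operatorname{Spec}(\hat{A})$ is proper over a complete noetherian base. Grothendieck's existence theorem then algebraizes the adic datum $(Z_n)$ into an honest finite étale cover $\hat{Z}\to X\times_S\operatorname{Spec}(\hat{A})$, whose base change to $S_0$ is isomorphic to $Z_0$.

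For step (iii), our reduction also ensures that $(A,I)$ is the henselization of a pair $(B,J)$ with $B$ a noetherian G-ring, so hypothesis \textit{3.} of Theorem \ref{second answer to Artin's first question} is satisfied; and by Lemma \ref{finite étale covers modulo iso functor locally of finite presentation}, $\mathscr{F}$ is locally of finite presentation. Applying Theorem \ref{second answer to Artin's first question} to the class $[\hat{Z}]\in\mathscr{F}(\hat{A})$ with $N=1$ will then yield a class $[Z]\in\mathscr{F}(A)$, i.e.\ a finite étale cover $Z\to X$, whose reduction mod $I$ is isomorphic to $Z_0$. This is precisely essential surjectivity.

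The step I expect to require the most care is the algebraization (ii), as one must confirm that Grothendieck's existence theorem is indeed available in this non-local setting and that finite étaleness is preserved under the formal-to-algebraic equivalence; fortunately, the noetherianness, $I\hat{A}$-adic completeness of $\hat{A}$, and properness of $X\times_S\operatorname{Spec}(\hat{A})$ are all guaranteed by the reductions already in place, after which the remainder of the argument is a direct translation of Artin's local proof.
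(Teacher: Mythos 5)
Your proposal is correct and follows essentially the same route as the paper: reduce to the $I$-adically complete case over $\hat{A}$, then descend to $A$ by applying Theorem \ref{second answer to Artin's first question} (with $N=1$) to the functor $\mathscr{F}$ of Lemma \ref{finite étale covers modulo iso functor locally of finite presentation}. The only difference is that where you carry out steps (i) and (ii) by hand (topological invariance of the étale site plus Grothendieck existence), the paper simply cites \cite[Theorem 18.3.4]{EGA4.4}, which is exactly that statement for a proper scheme over a complete noetherian adic ring.
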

\begin{proof}
Consider a finite étale morphism $X_0'\longrightarrow X_0$. Label $\hat{A}$ the completion of $A$ with respect to the ideal $I$ and let $\hat{S}=Spec(\hat{A})$, $\hat{X}=X\times_S \hat{S}$. Notice that $\hat{A}$ is a complete separated ring by Krull's theorem (see \cite[Theorem 10.17]{AM}). By \cite[Theorem 18.3.4]{EGA4.4}, we have that the functor
\[
\textit{É}t_f(\hat{X})\longrightarrow \textit{É}t_f(X_0)
\]
\[
Z\mapsto Z\times_S S_0
\]
is an equivalence of categories. Then there exists some $[\hat{X}'\longrightarrow \hat{X}]\in \mathscr{F}(\hat{A})$ such that
\[
\hat{X}'\times_{\hat{S}}S_0\cong X_0'
\]
By Theorem \ref{second answer to Artin's first question} we get that there exists some finite étale morphism $X'\longrightarrow X$ which is congruent modulo $I$ to $\hat{X}'\longrightarrow \hat{X}$, i.e.
\[
X'\times_S S_0\cong X_0'
\]
\end{proof}
It remains only to show that the functor in Theorem \ref{proper base change over henselian pairs} is fully faithful. 
\begin{lemma}\label{fully faithfulness}
The functor in \textnormal{Theorem \ref{proper base change over henselian pairs}} is fully faithful.
\end{lemma}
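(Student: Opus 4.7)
My plan is to handle the two halves of full faithfulness separately. Injectivity will come from a standard equalizer/clopen argument that exploits the henselian pair structure, while surjectivity will be obtained by lifting first to the $I$-adic completion and then applying Theorem \ref{second answer to Artin's first question} to the hom functor $\mathscr{G}$ of Lemma \ref{hom locally of finite presentation}.

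For injectivity, given $f, g \in Hom_X(Z, W)$ with $f|_{X_0} = g|_{X_0}$, I would consider the equalizer $E \subseteq Z$, defined as the pullback of the diagonal $\Delta_{W/X} \subseteq W \times_X W$ along the map $(f,g) : Z \to W \times_X W$. Since $W \to X$ is finite étale, it is both separated and unramified, so $\Delta_{W/X}$ is simultaneously a closed and an open immersion, and therefore so is $E \hookrightarrow Z$. The hypothesis $f|_{X_0} = g|_{X_0}$ forces $E \supseteq Z_0$, so the clopen complement $V := Z \setminus E$ is disjoint from $Z_0$. Using that $Z \to X$ is finite and $X \to S$ is proper, the image $T \subseteq S$ of $V$ is closed and avoids $S_0 = V(I)$; writing $T = V(J)$, this means $I + J = A$, which together with the fact that $I$ lies in the Jacobson radical of $A$ (a standard property of henselian pairs) forces $J = A$. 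Hence $T = \emptyset$, $V = \emptyset$, and $f = g$.

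For surjectivity, given $f_0 : Z_0 \to W_0$ over $X_0$, I plan to first lift it to an $\hat{X}$-morphism $\hat{f} : \hat{Z} \to \hat{W}$ by invoking the fullness part of \cite[Theorem 18.3.4]{EGA4.4} (the same tool used in Lemma \ref{essential surjectivity}); this $\hat{f}$ is an element of $\mathscr{G}(\hat{A})$ that reduces to $f_0$ in $\mathscr{G}(A/I)$. Since after the reductions performed at the beginning of this section we may assume $A$ is the henselization of a finitely generated $\mathbb{Z}$-algebra (in particular a noetherian G-ring), hypothesis \textit{3.} of Theorem \ref{second answer to Artin's first question} is satisfied. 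Applying that theorem to $\mathscr{G}$, which is locally of finite presentation by Lemma \ref{hom locally of finite presentation}, with $N = 1$ produces an $f \in \mathscr{G}(A) = Hom_X(Z,W)$ congruent to $\hat{f}$ modulo $I$, so that $f|_{X_0} = \hat{f}|_{X_0} = f_0$.

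The main obstacle is the injectivity step: it genuinely uses both the topology of finite étale morphisms (to promote the scheme-theoretic equalizer to a clopen subscheme) and the arithmetic of the henselian pair (to rule out nonempty closed subsets of $\mathrm{Spec}(A)$ disjoint from $V(I)$). Surjectivity, by contrast, is a largely formal combination of the already-established complete case with the approximation theorem, so the only bookkeeping there is checking that the reductions performed before Lemma \ref{essential surjectivity} really place us in a situation where Theorem \ref{second answer to Artin's first question} applies.
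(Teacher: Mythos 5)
Your proof is correct, but it takes a genuinely different route from the paper's. The paper encodes a morphism $\phi\colon X'\to X''$ over $X$ by its graph $\Gamma_\phi$, which is simultaneously an open and a closed immersion into $X'\times_X X''$ because $X''$ is finite étale over $X$; full faithfulness is thereby reduced to matching up the connected components of $X'\times_X X''$ of degree one over $X'$ with those of its restriction over $X_0$, and this component count is extracted from essential surjectivity (Lemma \ref{essential surjectivity}) via an auxiliary lemma stating that $X$ is nonempty and connected if and only if $X_0$ is. In particular the paper never applies the approximation theorem to the Hom functor: Lemma \ref{hom locally of finite presentation} is used only in the initial limit reduction, and fullness is deduced formally from essential surjectivity. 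You instead split the statement in two. Your injectivity argument (the equalizer is clopen since $\Delta_{W/X}$ is, its complement has closed image in $S$ disjoint from $V(I)$, and $I$ contained in the Jacobson radical rules that out) is more elementary and entirely independent of approximation. Your fullness argument re-runs the completion-plus-approximation mechanism of Lemma \ref{essential surjectivity}, this time fed with the functor $\mathscr{G}$ of Lemma \ref{hom locally of finite presentation}, which your route — unlike the paper's — genuinely needs; the reduction to the case where $A$ is the henselization of a finitely generated $\mathbb{Z}$-algebra was indeed carried out for Hom sets as well, so Theorem \ref{second answer to Artin's first question} applies as you claim. Both arguments are sound: yours is arguably more transparent and self-contained for injectivity, while the paper's economizes on invocations of the approximation theorem by routing everything through the clopen-component bijection.
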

\begin{proof}
Let $X'$ and $X''$ be two finite étale schemes over $X$ and let $\phi \in Hom_X(X',X'')$. The morphism $\phi$ corresponds uniquely to its graph $\Gamma_{\phi} : X'\longrightarrow X'\times_X X''$, which is an open immersion as both $X'$ and $X''$ are of finite type over $X$ and as $X''$ is étale over $X$ (see \cite[Corollaire 3.4]{SGA1}). Also notice that $\Gamma_{\phi}$ is a closed immersion (see \cite[Exercise 3.3.10]{L}). If we assume that $X'$ is connected and nonempty, $\phi$ corresponds uniquely to a connected component of $X'\times_X X''$ of degree one over $X'$. The degree of such a component can be measured at any point of $X'$. We conclude therefore by applying the next lemma to a component of $X'\times_X X''$.
\end{proof}
\begin{lemma}
$X$ is nonempty and connected if and only if the same is true for $X_0$. 
\end{lemma}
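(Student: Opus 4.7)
The plan is to treat nonemptiness and connectedness separately, and within each to handle the two implications independently, since the reverse direction uses a genuinely different ingredient from the henselian setup.

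For nonemptiness the forward direction is immediate from $X_0\subseteq X$; for the converse I would exploit properness of $f\colon X\to S$. The image $f(X)$ is a nonempty closed subset $V(J)\subseteq S=\mathrm{Spec}(A)$, so $J$ is a proper ideal and is contained in some maximal ideal $\mathfrak{m}$. The key henselian input is that $I$ lies in the Jacobson radical of $A$, forcing $\mathfrak{m}\in V(I)\cap f(X)$; lifting $\mathfrak{m}$ along $f$ then produces a point of $X_0$.

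For connectedness, the direction $X_0$ connected $\Rightarrow$ $X$ connected is the easy one: any nontrivial clopen decomposition $X=X^{(1)}\sqcup X^{(2)}$ exhibits both $X^{(i)}$ as nonempty closed subschemes of $X$, hence again proper over $S$, and applying the nonemptiness part to each piece produces a nontrivial clopen decomposition of $X_0$. The hard part will be the converse $X$ connected $\Rightarrow$ $X_0$ connected, where Lemma~\ref{essential surjectivity} must be brought in. Suppose $X_0=X_0^{(1)}\sqcup X_0^{(2)}$ with both pieces nonempty. The inclusion $X_0^{(1)}\hookrightarrow X_0$ is both an open and a closed immersion; being quasi-finite and proper it is finite, hence finite étale, with fiber cardinality $1$ over $X_0^{(1)}$ and $0$ over $X_0^{(2)}$. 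By essential surjectivity I lift it to a finite étale cover $Y\to X$ with $Y\times_X X_0\cong X_0^{(1)}$. The degree of $Y\to X$ is locally constant on $X$, and at a point $x\in X_0$ it coincides with the fiber cardinality of $X_0^{(1)}\to X_0$ at $x$; so it takes the value $1$ on the nonempty locus $X_0^{(1)}\subseteq X$ and $0$ on the nonempty locus $X_0^{(2)}\subseteq X$, contradicting connectedness of $X$.

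The main obstacle is precisely this last step, since it depends on importing essential surjectivity from Lemma~\ref{essential surjectivity}. Once that input is available, the argument is forced by the locally constant degree function of a finite étale cover, but without it a decomposition of $X_0$ gives no direct handle on the topology of $X$ itself; this is also why the present lemma has to be placed after Lemma~\ref{essential surjectivity} even though it is invoked only in the proof of fully faithfulness.
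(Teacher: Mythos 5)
Your proof is correct and follows essentially the same route as the paper's: nonemptiness of $X_0$ via properness of $f$ and the fact that $I$ lies in the Jacobson radical of $A$, the implication ``$X_0$ connected $\Rightarrow X$ connected'' by applying this to each clopen piece of $X$, and the converse by lifting a nonempty clopen piece of $X_0$ to a finite étale cover of $X$ via Lemma~\ref{essential surjectivity} and a degree argument. The only cosmetic difference is in the endgame of that last step: the paper shows the lifted cover is a nonempty connected component of $X$ strictly contained in $X$, whereas you contradict connectedness directly through the locally constant degree function taking both values $1$ and $0$ --- the two arguments are interchangeable.
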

\begin{proof}
We are given the following cartesian square
\[
\begindc{\commdiag}[20]
\obj(0,30)[1]{$X_0$}
\obj(30,30)[2]{$X$}
\obj(0,0)[3]{$S_0$}
\obj(30,0)[4]{$S$}
\mor{1}{2}{$$}
\mor{1}{3}{$$}
\mor{2}{4}{$f$}
\mor{3}{4}{$$}
\enddc
\]
If $X$ is connected and nonempty, then $f(X)\subseteq S$ is a nonempty closed subset of $S$ (as $f$ is proper). Let $J$ be an ideal of $A$ that identifies $f(X)$. Let $f(x)=p \in V(J)$ be a closed point of $S$. As $I$ is contained in the Jacobson radical of $A$, the prime ideal $p$ lies in $S_0$. Then
\[
\begindc{\commdiag}[20]
\obj(0,30)[1]{$X_0$}
\obj(30,30)[2]{$X$}
\obj(0,0)[3]{$S_0$}
\obj(30,0)[4]{$S$}
\obj(-20,50)[5]{$\{x\}$}
\mor{1}{2}{$$}
\mor{1}{3}{$$}
\mor{2}{4}{$f$}
\mor{3}{4}{$$}
\mor{5}{2}{$$}
\mor{5}{3}{$$}
\mor{5}{1}{$$}[-1,1]
\enddc
\]
In particuar, $X_0$ is nonempty. Furthermore, as this argument can be used for any connected component of $X$, if $X$ is disconnected then also $X_0$ is disconnected.\newline
Conversely, assume that $X_0$ is disconnected. Label $C_0$ a nonempty connected component of $X_0$. As the scheme $X_0$ is quasi-compact, $C_0$ is open and closed in $X_0$. Therefore, $C_0\longrightarrow X_0$ is a finite étale morphism. By Lemma \ref{essential surjectivity}, there exists a finite étale morphism $C\longrightarrow X$ which induces $C_0\longrightarrow X_0$. As $C_0$ is connected and nonempty, the same is true for $C$. The morphism $C\longrightarrow X$ is therefore of degree $1$ at every point of $C$. As it is also finite and étale, it is both an open and a closed immersion, i.e. $C$ is a connected component of $X$. If $C=X$, we would get $C_0=X_0$, a contradiction. Then $X$ is disconnected. Finally, it is clear that if $X_0$ is nonempty, $X$ is nonempty too. 
\end{proof}
Theorem \ref{proper base change over henselian pairs} follows immediately from Lemma \ref{essential surjectivity} and Lemma \ref{fully faithfulness}.

\section{Henselian couples}
Recall that an henselian pair $(A,I)$ is a ring $A$ together with an ideal $I\subseteq$ such that 
\begin{enumerate}
\item $I$ is contained in the Jacobson ideal of $A$;
\item for every finite $A$ algebra $B$, there is a bijection between the set of idempotent elements of $B$ and the set of idempotent elements of $B\otimes_A A/I$.
\end{enumerate} 
For more details, see \cite{Ray}.\\
Let $(A,I)$ be an henselian pair. Then for every finite morphism $Spec(B)=X\longrightarrow Spec(A)$, we have a bijection
\[
Id(B)=Of(X)=Of(X_0)=Id(B/IB) \hspace{0.5cm} \text{where } X_0=X\times_{Spec(A)} Spec(A/I)
\]
Here $Of(Z)$ denotes the set of subsets of $Z$ which are both open and closed.\\
This fact suggests the following definition (see \cite[Définition 18.5.5]{EGA4.4}), which is meant to generalize the notion of henselian pair to the non-affine setting.
\begin{definition}\label{definition henselian couple}
Let $X$ be a scheme and let $X_0$ be a closed subscheme. We say that $(X,X_0)$ is an \textit{henselian couple} if for every finite morphism $Y\longrightarrow X$ we have a bijection
\[
Of(Y)=Of(Y_0)
\]
where $Y_0=Y\times_X X_0$.
\end{definition}
\begin{remark}
If $X$ is locally noetherian, it is a consequence of \textnormal{\cite[Proposition 6.1.4]{EGA1}}  and \textnormal{\cite[Corollaire 6.1.9]{EGA1}} that connected sets in $Of(X)$ (resp. $Of(X_0)$) are in bijection with $\Pi_0(X)$ (resp. $\Pi_0(X_0)$), the set of connected components of $X$ (resp. $X_0$).
\end{remark}
\begin{remark}
It is a consequence of \textnormal{\cite[Corollary 5.1.8]{EGA1}} that $(X,X_0)$ is an henselian couple if and only if $(X_{red},(X_0)_{red})$ is an henselian couple as well.
\end{remark}
\begin{remark}\label{henselian pairs are affine henselian couples}
It is immediate to observe that if $(A,I)$ is a pair and $(Spec(A),Spec(A/I))$ is an henselian couple, then $I$ is contained in the Jacobson radical of $A$. In fact, if $m\subseteq A$ is a maximal ideal, then we have a bijection
\[
Of(Spec(A/m))=Of(Spec(A/m\otimes_A A/I))
\]
In particular, $Spec(A/m\otimes_A A/I)$ can not be the empty scheme. Therefore, as it is a closed subscheme of $Spec(A/m)$, we must have an equality $Spec(A/m)=Spec(A/m\otimes_A A/I)$, whence $I\subseteq m$. Moreover, if $Z\longrightarrow Spec(A)$ is a finite morphism, then $Z=Spec(B)$ is affine and the corresponding morphism $A\longrightarrow B$ is finite. Then we have bijections
\[
Id(B)=Of(Spec(B))=Of(Spec(B/IB))=Id(B/IB)
\]
We have just showed that an affine henselian couple is an henselian pair. The converse was observed at the beginning of this section. 
\end{remark}
\begin{lemma}\label{a couple which is proper over a noetherian henselian pair is henselian}
Let $(A,I)$ be an henselian pair with $A$ noetherian and let $X$ be a proper $A$-scheme. Set $S=Spec(A)$, $S_0=Spec(A/I)$ and let $X_0=X\times_S S_0$. Then $(X,X_0)$ is an henselian couple.
\end{lemma}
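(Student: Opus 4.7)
The plan is to reduce the henselian-couple condition for $(X,X_0)$ to a bijection between sets of connected components, and then invoke the unlabelled connectedness lemma established inside Section 2.

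I would start by fixing an arbitrary finite morphism $f\colon Y\to X$. Since $X\to S$ is proper and $f$ is finite, the composition $Y\to S$ is proper; moreover, because $A$ is noetherian, both $X$ and $Y$ are noetherian. In particular $Y$ has only finitely many connected components, and writing $Y=\bigsqcup_i C_i$ for its (clopen) connected components, one identifies $Of(Y)$ with the powerset $2^{\pi_0(Y)}$ by sending a clopen subset to the set of components it contains (cf.\ the remark following Definition \ref{definition henselian couple}). The analogous identification holds for $Y_0=Y\times_X X_0=Y\times_S S_0$, which is closed in $Y$ and therefore also noetherian. Hence the henselian-couple condition $Of(Y)\cong Of(Y_0)$ reduces to showing that the natural map $\pi_0(Y)\to\pi_0(Y_0)$ is a bijection.

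Next, I would apply to each component $C_i$, separately, the connectedness lemma from Section 2 (the unlabelled one stating that a scheme proper over $S$ is connected and nonempty if and only if its pullback to $S_0$ is): each $C_i\hookrightarrow Y$ is a closed immersion, so $C_i\to S$ inherits properness from $Y\to S$; and $C_i$ is connected and nonempty by construction. The lemma then yields that $(C_i)_0:=C_i\times_S S_0$ is connected and nonempty. Since $Y_0=\bigsqcup_i(C_i)_0$ with each summand clopen in $Y_0$, the $(C_i)_0$ are exactly the connected components of $Y_0$, and $C_i\mapsto(C_i)_0$ realizes the desired bijection $\pi_0(Y)\xrightarrow{\sim}\pi_0(Y_0)$.

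The substantive content of the argument is already packaged inside the earlier connectedness lemma, whose forward direction exploits properness (to produce a closed point of the image lying in $S_0$, using that $I$ sits in the Jacobson radical) and whose reverse direction invokes Lemma \ref{essential surjectivity} to lift a connected component of $X_0$ to a clopen subscheme of $X$. Consequently, I do not anticipate a serious obstacle here: all that is needed is the verification that properness is preserved when one restricts to a clopen component and that the noetherian hypothesis supplies finite $\pi_0$, both of which are immediate.
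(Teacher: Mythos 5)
Your argument is correct, but it takes a more self-contained route than the paper, which disposes of the lemma in one line as a consequence of Theorem \ref{proper base change over henselian pairs} together with \cite[Exposé XII, Proposition 6.5 (i)]{SGA4} — i.e.\ it delegates to SGA4 the general implication ``equivalence of categories of finite étale coverings $\Rightarrow$ henselian couple''. What you do is reprove that implication by hand: for a finite $Y\to X$ you note that $Y$ is again proper (and, by noetherianity, finitely presented) over $S$, identify $Of(Y)$ with subsets of $\pi_0(Y)$, and apply the unlabelled connectedness lemma of Section 2 to each component $C_i$ of $Y$. This is legitimate: that lemma and its proof use nothing about the scheme beyond properness and finite presentation over the henselian pair $(A,I)$, and its ``converse'' half invokes Lemma \ref{essential surjectivity}, i.e.\ essential surjectivity in Theorem \ref{proper base change over henselian pairs}, applied with $C_i$ in place of $X$. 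So the essential input is the same in both proofs, namely Theorem \ref{proper base change over henselian pairs}; your version buys self-containedness and makes visible that only essential surjectivity (not full faithfulness) is needed for this particular lemma, at the cost of redoing the $\pi_0$ bookkeeping that the SGA4 citation packages once and for all. One cosmetic slip: the canonical map goes $\pi_0(Y_0)\to\pi_0(Y)$, and what you actually construct is its inverse $C_i\mapsto (C_i)_0$; this does not affect the argument.
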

\begin{proof}
This is a trivial consequence of Theorem \ref{proper base change over henselian pairs} and \cite[Exposé XII, Proposition 6.5 (i)]{SGA4}.
\end{proof}
\begin{lemma}\label{if the pair associated to a couple is henselian, the couple is henselian}
Let $X$ be a scheme and let $X_0$ be a closed subscheme. Let $A$ be a noetherian ring and assume that $X$ is proper over $Spec(A)$. Also assume that $X_0=X\times_{Spec(A)} Spec(A/I)$ for some ideal $I\subseteq A$. Put $J=ker(B=\mathscr{O}_X(X)\longrightarrow \mathscr{O}_{X_0}(X_0))$. If $(B,J)$ is an henselian pair, then $(X,X_0)$ is an henselian couple.
\end{lemma}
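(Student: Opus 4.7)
The plan is to reduce the statement to Lemma \ref{a couple which is proper over a noetherian henselian pair is henselian} by replacing the base $\mathrm{Spec}(A)$ with $\mathrm{Spec}(B)$. The point is that $X$ remains proper over this new base and $X_0$ is precisely the fiber over $\mathrm{Spec}(B/J)$, so the previous lemma will apply directly to the noetherian henselian pair $(B,J)$.

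First I would record the basic properties of $B$. Since $f:X\to \mathrm{Spec}(A)$ is proper and $A$ is noetherian, the coherence of the direct image for proper morphisms forces $B = f_*\mathcal{O}_X$ to be a finite $A$-module, hence a noetherian ring. The structure morphism factors canonically as $X\to \mathrm{Spec}(B)\to \mathrm{Spec}(A)$; since the second map is affine, hence separated, the standard cancellation property of proper morphisms yields that $X\to \mathrm{Spec}(B)$ is itself proper.

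Next I would identify $X_0$ with $X\times_{\mathrm{Spec}(B)}\mathrm{Spec}(B/J)$. Taking global sections of the short exact sequence $0\to I\mathcal{O}_X\to \mathcal{O}_X\to \mathcal{O}_{X_0}\to 0$ exhibits $J$ as $\Gamma(X, I\mathcal{O}_X)$. On one hand, since every element of $J$ is a global section of $I\mathcal{O}_X$, we obtain $J\mathcal{O}_X\subseteq I\mathcal{O}_X$; on the other hand, the image of $I$ under $A\to B$ visibly lies in $J$, so $I\mathcal{O}_X\subseteq J\mathcal{O}_X$. The two ideal sheaves of $\mathcal{O}_X$ therefore coincide, and the scheme-theoretic pullbacks $V(J\mathcal{O}_X) = X\times_{\mathrm{Spec}(B)}\mathrm{Spec}(B/J)$ and $V(I\mathcal{O}_X)=X_0$ agree as closed subschemes of $X$.

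With these two reductions in hand, Lemma \ref{a couple which is proper over a noetherian henselian pair is henselian} applied to the noetherian henselian pair $(B,J)$ and to the proper $B$-scheme $X$ yields directly that $(X, X_0)$ is an henselian couple. The main (and essentially only) point requiring care is the identification $X_0 = X\times_{\mathrm{Spec}(B)}\mathrm{Spec}(B/J)$; this is a short diagram chase once $J$ is recognized as $\Gamma(X, I\mathcal{O}_X)$, and everything else in the argument is formal.
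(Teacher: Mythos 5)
Your proof is correct, and it takes a mildly but genuinely different route from the paper's. The paper factors $f:X\to Spec(A)$ through the henselization $Spec(A^h)$ of the pair $(A,I)$: since $(B,J)$ is henselian, the universal property of $(A^h,I^h)$ produces a ring map $A^h\to B=\mathscr{O}_X(X)$, hence a proper morphism $X\to Spec(A^h)$, and the identity $A^h\otimes_A A/I=A^h/I^h$ identifies $X_0$ with the fiber over $Spec(A^h/I^h)$; one then applies Lemma \ref{a couple which is proper over a noetherian henselian pair is henselian} to the noetherian henselian pair $(A^h,I^h)$. You instead factor through $Spec(B)$ itself and apply the same lemma to $(B,J)$ directly. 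This obliges you to check two things the paper does not need: that $B$ is noetherian (which you correctly extract from the coherence of $f_*\mathscr{O}_X$ for $f$ proper over a noetherian base) and that $J\mathscr{O}_X=I\mathscr{O}_X$, so that $X\times_{Spec(B)}Spec(B/J)=X_0$ (your two-sided inclusion argument is sound: $J=\Gamma(X,I\mathscr{O}_X)$ gives one containment, and the factorization of $A\to\mathscr{O}_{X_0}(X_0)$ through $A/I$ gives the other). In exchange you avoid invoking the henselization functor, its universal property, and the Stacks Project results on noetherianity of $A^h$ and on $I^h=IA^h$. Both arguments are complete; yours is arguably more self-contained, while the paper's is the one reused verbatim in Corollary \ref{an henselian couple proper over a noetherian ring is proper over an henselian pair}, where the factorization through $Spec(A^h)$ is what is actually needed.
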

\begin{proof}
Let $(A^h,I^h)$ be the henselization of the couple $(A,I)$ given by \cite[Tag 0A02]{SP}. Then we have the following diagram
\[
\begindc{\commdiag}[30]
\obj(50,20)[1]{$(X,X_0)$}
\obj(0,0)[2]{$(Spec(A^h),Spec(A^h/I^h))$}
\obj(50,0)[3]{$(Spec(A),Spec(A/I))$}
\mor{1}{3}{$f$}
\mor{2}{3}{$\gamma$}
\enddc
\]
which induces the following diagram of pairs:
\[
\begindc{\commdiag}[30]
\obj(30,20)[1]{$(B,J)$}
\obj(0,0)[2]{$(A^h,I^h)$}
\obj(30,0)[3]{$(A,I)$}
\mor{3}{1}{$$}
\mor{3}{2}{$$}
\mor{2}{1}{$\psi$}[1,1]
\enddc
\]
The morphism $\psi$ is the one induced by the universal property of $(A^h,I^h)$. As 
\[
Hom_{Rings}(A^h,B)=Hom_{Schemes}(X,Spec(A^h))
\]
the homomorphism $\psi$ identifies a unique morphism of schemes $\phi : X \longrightarrow Spec(A^h)$. Thus we get the following commutative diagram
\[
\begindc{\commdiag}[30]
\obj(30,20)[1]{$X$}
\obj(0,0)[2]{$Spec(A^h)$}
\obj(30,0)[3]{$Spec(A)$}
\mor{1}{2}{$\phi$}
\mor{2}{3}{$\gamma$}
\mor{1}{3}{$f$}
\enddc
\]
Moreover, by \cite[Tag 0AGU]{SP}, we get that
\[
\gamma^{-1}(Spec(A/I))=Spec(A^h\otimes_A A/I)=Spec(A^h/I^h)
\]
whence
\[
X\times_{Spec(A^h)} Spec(A^h/I^h)=X_0
\]
Therefore, the couple $(X,X_0)$ lies over the henselian couple $(Spec(A^h),Spec(A^h/I^h))$. Furthermore, $A^h$ is a noetherian ring (see \cite[Tag 0AGV]{SP}). Finally, as $f$ is a proper morphism and $\gamma$ is separated, we get that $\phi$ is proper as well by \cite[Proposition 3.3.16]{L}. Then we can conclude that $(X,X_0)$ is an henselian couple by the previous lemma. 
\end{proof}
The previous lemma tells us that, under some appropriate hypothesis, if the pair 
\[
(\mathscr{O}_X(X),ker(\mathscr{O}_X(X)\longrightarrow \mathscr{O}_{X_0}(X_0)))
\]
is henselian, then $(X,X_0)$ is an henselian couple. It is natural to ask if the converse is true, i.e. if given an henselian couple $(X,X_0)$ the associated pair is henselian. An answer is provided by the next lemma.
\begin{lemma}\label{henselian pair associated to an henselian couple}
Let $X$ be a quasi-compact and quasi-separated scheme and let $i: X_0\longrightarrow X$ be a closed immersion such that $(X,X_0)$ is an henselian couple.\\ Then $(B,J)=(\mathscr{O}_X(X),ker(\mathscr{O}_X(X)\longrightarrow \mathscr{O}_{X_0}(X_0)))$ is an henselian pair.
\end{lemma}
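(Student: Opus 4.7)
The plan is to verify the two conditions of the definition of a henselian pair given at the beginning of Section 3: (i) $J\subseteq\mathrm{rad}(B)$; and (ii) for every finite $B$-algebra $C$, the reduction map $\mathrm{Id}(C)\to\mathrm{Id}(C/JC)$ is bijective. Rather than trying to verify (ii) head-on for an arbitrary $C$, I will invoke the standard equivalent criterion expressed in terms of lifting coprime monic factorizations of polynomials; this criterion is tailored to the finite free $\mathcal{O}_X$-algebras $\mathcal{O}_X[T]/(f)$, on which global sections can be computed cleanly thanks to the freeness.

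For (i), fix $j\in J$ and consider the closed immersion $Y = V(1+j)\hookrightarrow X$, which is a finite morphism (closed immersions are finite, being locally quotients by an ideal). Since $j\in J = \ker(B\to\Gamma(X_0,\mathcal{O}_{X_0}))$, the restriction $(1+j)|_{X_0}$ equals $1$, so $Y_0 = Y\times_X X_0$ is empty. The henselian couple hypothesis applied to $Y\to X$ forces $Of(Y)\cong Of(Y_0) = \{\emptyset\}$, whence $Y=\emptyset$. Therefore $1+j$ is a unit at every stalk of $\mathcal{O}_X$, and hence in $B = \Gamma(X,\mathcal{O}_X)$. Replacing $j$ by $xj$ for arbitrary $x\in B$ yields $j\in\mathrm{rad}(B)$.

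For (ii), I use the equivalence (see e.g.\ Stacks Project, Tag 09XI) that $(B,J)$ is a henselian pair if and only if $J\subseteq\mathrm{rad}(B)$ and, for every monic $f\in B[T]$, the reduction map $\mathrm{Id}(B[T]/(f))\to\mathrm{Id}((B/J)[T]/(\bar f))$ is bijective. Given such an $f$, form the finite $X$-scheme $Y = \mathrm{Spec}_X(\mathcal{O}_X[T]/(f))$. Since $f$ is monic, $\mathcal{O}_X[T]/(f)$ is free of rank $\deg f$ as an $\mathcal{O}_X$-module; hence, setting $B_0 = \Gamma(X_0,\mathcal{O}_{X_0})$, one has $\Gamma(Y,\mathcal{O}_Y) = B[T]/(f)$ and $\Gamma(Y_0,\mathcal{O}_{Y_0}) = B_0[T]/(\bar f)$. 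The henselian couple hypothesis then delivers a bijection $\mathrm{Id}(B[T]/(f))\cong\mathrm{Id}(B_0[T]/(\bar f))$. Since this factors as $\mathrm{Id}(B[T]/(f))\to\mathrm{Id}((B/J)[T]/(\bar f))\hookrightarrow\mathrm{Id}(B_0[T]/(\bar f))$, with the second map injective because $B/J\hookrightarrow B_0$ stays injective after tensoring with the free $B$-module $B[T]/(f)$, the first map must itself be a bijection, establishing the criterion.

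The main difficulty is precisely what is circumvented by passing to the polynomial criterion: verifying (ii) directly for an arbitrary finite $B$-algebra $C$ would require comparing $\mathrm{Id}(C)$ with the idempotents of $\Gamma(Y,\mathcal{O}_Y)$ for $Y = X\times_{\mathrm{Spec}(B)}\mathrm{Spec}(C)$, and this comparison does not behave well in general (in particular, outside of the flat case). Restricting to $C$ of the form $B[T]/(f)$ makes the comparison transparent through the freeness of $\mathcal{O}_X[T]/(f)$, and is enough because the monic-polynomial version of the condition is already equivalent to the full finite-algebra version.
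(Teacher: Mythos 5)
Your argument is correct, but it takes a genuinely different route from the paper's. The paper verifies the \'etale lifting criterion of \cite[Tag 09XI]{SP}: given an \'etale map $B\to C$ with a section $\sigma:C\to B/J$, it forms $X_C=X\times_{Spec(B)}Spec(C)$, invokes \cite[Proposition 18.5.4]{EGA4.4} to identify $\Gamma(X_C/X)$ with $\Gamma(X_C\times_X X_0/X_0)$ for the separated \'etale morphism $X_C\to X$, lifts the section over $X_0$ determined by $\sigma$ to a section over $X$, and reads off the desired $B$-morphism $C\to B$ on global sections. You instead stay entirely within the defining property of a henselian couple (finite morphisms and clopen subsets): the finite $X$-schemes $V(1+j)$ and $\mathrm{Spec}_X(\mathscr{O}_X[T]/(f))$ give you $J\subseteq\mathrm{rad}(B)$ and idempotent lifting for the finite free algebras $B[T]/(f)$, and your use of freeness to get $\Gamma(Y,\mathscr{O}_Y)=B[T]/(f)$ (together with the factorization through the injection $(B/J)[T]/(\bar f)\hookrightarrow B_0[T]/(\bar f)$) is exactly the right way to dodge the failure of global sections to commute with non-flat base change. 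The one point you must tighten is the citation of your criterion: Tag 09XI does not state it --- it gives idempotent lifting for \emph{all} finite algebras, or Gabber's root-finding condition, not the restriction to $B[T]/(f)$ with $f$ monic. The implication you need (rad plus idempotent lifting for these special algebras implies henselian) is true but requires the standard chain: lift the CRT idempotent of a coprime monic factorization of $\bar f$, use $J\subseteq\mathrm{rad}(B)$ to see that every connected component of $Spec(B)$ meets $V(J)$ so the two factors of $B[T]/(f)$ have the expected constant ranks, take characteristic polynomials to produce the lifted factorization $f=gh$, and then invoke the classical fact that lifting coprime monic factorizations together with $J\subseteq\mathrm{rad}(B)$ characterizes henselian pairs. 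This is all standard, but it should be spelled out or referenced precisely rather than attributed to Tag 09XI. Both approaches are sound; the paper's is shorter because EGA has already related henselian couples to sections of \'etale morphisms, while yours is more self-contained in that it uses only the definition of a henselian couple at the cost of a heavier ring-theoretic input.
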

\begin{proof}
By \cite[Tag 09XI]{SP}, it is sufficient to show that for every étale ring map $B\longrightarrow C$ together with a $B$-morphism $\sigma: C\longrightarrow B/J$, there exists a $B$-morphism $C\longrightarrow B$ which lifts $\sigma$. \\
Consider the cartesian diagram
\[
\begindc{\commdiag}[20]
\obj(0,30)[1]{$X_C=X\times_{Spec(B)} Spec(C)$}
\obj(50,30)[2]{$X$}
\obj(0,0)[3]{$Spec(C)$}
\obj(50,0)[4]{$Spec(B)$}
\mor{1}{2}{$$}
\mor{1}{3}{$$}
\mor{2}{4}{$$}
\mor{3}{4}{$$}
\enddc
\]
As $Spec(C)\longrightarrow Spec(B)$ is étale and separated, the morphism $X_C \longrightarrow X$ is étale and separated as well. Then, by \cite[Proposition 18.5.4]{EGA4.4}, we have a bijection
\[
\Gamma(X_C/X)\longrightarrow \Gamma(X_C\times_X X_0/X_0)
\]
between the sections of $X_C\longrightarrow X$ and those of $X_C\times_X X_0\longrightarrow X_0$.\\
\textit{Observation 1.} The universal property of $X_C\times_X X_0$ tells us that
\[
\Gamma(X_C\times_X X_0/X_0)\cong Hom_X(X_0,X_C)
\]
\textit{Observation 2.} Let $\mathscr{J}\subseteq \mathscr{O}_X$ be the sheaf of ideals associated to $X_0$. Then we have a short exact sequence of $\mathscr{O}_X$-modules
\[
0\longrightarrow \mathscr{J} \longrightarrow \mathscr{O}_X \longrightarrow i_*\mathscr{O}_{X_0}\longrightarrow 0
\]
Applying the global sections functor, we get an exact sequence
\[
0\longrightarrow J=\mathscr{J}(X)\longrightarrow \mathscr{O}_X(X)=B \longrightarrow \mathscr{O}_{X_0}(X_0)
\]
Hence, we have an homomorphism
\[
B/J\longrightarrow \mathscr{O}_{X_0}(X_0)
\]
Therefore, we get a morphism of schemes 
\[
X_0 \longrightarrow Spec(\mathscr{O}_{X_0}(X_0))\longrightarrow Spec(B/J)
\]
Also notice that the diagram
\[
\begindc{\commdiag}[20]
\obj(0,30)[1]{$X_0$}
\obj(30,30)[2]{$X$}
\obj(0,0)[3]{$Spec(B/J)$}
\obj(30,0)[4]{$Spec(B)$}
\mor{1}{2}{$$}
\mor{1}{3}{$$}
\mor{2}{4}{$$}
\mor{3}{4}{$$}
\enddc
\]
is commutative.

Now consider the diagram
\[
\begindc{\commdiag}[20]
\obj(0,30)[1]{$X_C$}
\obj(30,30)[2]{$X$}
\obj(0,0)[3]{$Spec(C)$}
\obj(30,0)[4]{$Spec(B)$}
\obj(-30,50)[5]{$X_0$}
\obj(-30,-20)[6]{$Spec(B/J)$}
\mor{1}{2}{$$}
\mor{1}{3}{$$}
\mor{2}{4}{$$}
\mor{3}{4}{$$}
\mor{5}{2}{$$}
\mor{5}{3}{$$}
\mor{5}{6}{$$}
\mor(-28,48)(-2,32){$\exists ! \hspace{0.1cm}\tilde{\alpha}$}[0,1]
\mor{6}{3}{$$}
\mor{6}{4}{$$}
\enddc
\]
Label $\tilde{\alpha}: X_0\longrightarrow X_C$ the $X$-morphism provided by the universal property of $X_C$ and let $\alpha : X \longrightarrow X_C$ be the corresponding $X$-morphism in $\Gamma(X_C/X)$.\\
Consider the following commutative diagram
\[
\begindc{\commdiag}[20]
\obj(0,40)[1]{$X$}
\obj(30,40)[2]{$X_C$}
\obj(60,40)[3]{$X$}
\obj(30,20)[4]{$Spec(C)$}
\obj(60,20)[5]{$Spec(B)$}
\obj(60,0)[6]{$Spec(B/J)$}
\mor{1}{2}{$\alpha$}
\mor{2}{3}{$$}
\mor{2}{4}{$$}
\mor{3}{5}{$$}
\mor{4}{5}{$$}
\mor{6}{4}{$$}
\mor{6}{5}{$$}
\mor{1}{4}{$$}
\cmor((0,43)(2,48)(10,50)(30,50)(50,50)(58,48)(60,43)) \pdown (30,52){$id_X$}
\enddc
\]
and the corresponding commutative diagram in \textit{Rings}:
\[
\begindc{\commdiag}[20]
\obj(0,40)[1]{$B$}
\obj(30,40)[2]{$\mathscr{O}_{X_C}(X_C)$}
\obj(60,40)[3]{$B$}
\obj(30,20)[4]{$C$}
\obj(60,20)[5]{$B$}
\obj(60,0)[6]{$B/J$}
\mor{2}{1}{$\alpha$}
\mor{3}{2}{$$}
\mor{4}{2}{$$}
\mor{5}{3}{$id_B$}[\atright,\solidarrow]
\mor{5}{4}{$\phi$}
\mor{4}{6}{$\sigma$}[\atright,\solidarrow]
\mor{5}{6}{$\pi$}
\mor{4}{1}{$\psi$}
\cmor((0,43)(2,48)(10,50)(30,50)(50,50)(58,48)(60,43)) \pdown (30,52){$id_B$}
\enddc
\]
It is then clear that $\psi$ is the $B$-morphism we were looking for. This concludes the proof of the lemma.
\end{proof}
\begin{corollary}\label{an henselian couple proper over a noetherian ring is proper over an henselian pair}
Let $(X,X_0)$ be an henselian couple. Assume that $X$ is proper over a noetherian ring $A$ and that $X_0=X\times_{Spec(A)} Spec(A/I)$ for some ideal $I\subseteq A$. Then $(X,X_0)$ is proper over an henselian pair.
\end{corollary}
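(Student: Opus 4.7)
The plan is to show that the henselian pair constructed in Lemma \ref{henselian pair associated to an henselian couple} is itself the desired base. Set $B = \mathscr{O}_X(X)$ and $J = \ker(\mathscr{O}_X(X) \to \mathscr{O}_{X_0}(X_0))$. Because $X \to Spec(A)$ is proper with $Spec(A)$ affine, $X$ is automatically quasi-compact and quasi-separated, so Lemma \ref{henselian pair associated to an henselian couple} directly yields that $(B,J)$ is an henselian pair. It therefore remains only to check that the canonical adjunction morphism $\phi : X \to Spec(B)$ is proper and realises $X_0$ as the pullback of $Spec(B/J)$.

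For the properness of $\phi$, I would invoke Grothendieck's finiteness theorem: since $A$ is noetherian and $X \to Spec(A)$ is proper, $B = H^0(X,\mathscr{O}_X)$ is a finite $A$-module, so $Spec(B) \to Spec(A)$ is finite and in particular separated. Combined with the properness of the composite $X \to Spec(B) \to Spec(A)$ and the cancellation result \cite[Proposition 3.3.16]{L} (already used in the previous lemma), this forces $\phi$ to be proper.

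The remaining point is the identification $X_0 = X \times_{Spec(B)} Spec(B/J)$, i.e.\ the equality of ideal sheaves $J \cdot \mathscr{O}_X = \mathscr{J}$ inside $\mathscr{O}_X$, where $\mathscr{J}$ is the ideal sheaf defining $X_0 \hookrightarrow X$. By assumption $X_0 = X \times_{Spec(A)} Spec(A/I)$, so $\mathscr{J} = I \cdot \mathscr{O}_X$. The inclusion $I \cdot \mathscr{O}_X \subseteq J \cdot \mathscr{O}_X$ follows from the fact that $IB \subseteq J$, visible because the composition $A \to B \to \mathscr{O}_{X_0}(X_0)$ factors through $A/I$. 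For the reverse inclusion, I would reuse Observation 2 of the proof of Lemma \ref{henselian pair associated to an henselian couple}: applying global sections to
\[
0 \longrightarrow \mathscr{J} \longrightarrow \mathscr{O}_X \longrightarrow i_*\mathscr{O}_{X_0} \longrightarrow 0
\]
gives $J = \Gamma(X,\mathscr{J})$, so every element of $J$ is already a section of $\mathscr{J}$, whence $J \cdot \mathscr{O}_X \subseteq \mathscr{J}$.

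The main obstacle I expect is this last identification of ideal sheaves; the rest of the argument is essentially bookkeeping that assembles the previous lemma together with the Stein-type factorization. Fortunately the verification is soft once one observes that $J$ is by construction the kernel of $\Gamma(X,\mathscr{O}_X) \to \Gamma(X_0,\mathscr{O}_{X_0})$ and therefore sits inside $\Gamma(X,\mathscr{J})$, while on the other side $I$ maps into $J$ tautologically.
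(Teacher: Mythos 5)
Your proposal is correct, but it diverges from the paper's own argument in the second half. Both proofs begin identically: $X$ is quasi-compact and quasi-separated, so Lemma \ref{henselian pair associated to an henselian couple} gives that $(B,J)=(\mathscr{O}_X(X),\ker(\mathscr{O}_X(X)\to\mathscr{O}_{X_0}(X_0)))$ is an henselian pair. The paper then reuses the construction of Lemma \ref{if the pair associated to a couple is henselian, the couple is henselian}: the universal property of the henselization gives a map $(A^h,I^h)\to(B,J)$, hence a factorization $X\to Spec(A^h)\to Spec(A)$, and $(X,X_0)$ is proper over $(A^h,I^h)$ by cancellation together with \cite[Tag 0AGU]{SP} and \cite[Tag 0AGV]{SP}. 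You instead exhibit $(B,J)$ itself as the base, which requires two extra verifications the paper's route avoids: Grothendieck's finiteness theorem to see that $B$ is a finite $A$-module (this is where you use noetherianity, and it also guarantees $B$ is noetherian, which matters for the downstream application of Lemma \ref{a couple which is proper over a noetherian henselian pair is henselian}), and the identification of ideal sheaves $J\cdot\mathscr{O}_X=\mathscr{J}$, which you carry out correctly via the two inclusions $IB\subseteq J$ and $J\subseteq\Gamma(X,\mathscr{J})$. In exchange, your argument lands on the more intrinsic, Stein-factorization-flavoured base $(B,J)$ and dispenses entirely with the henselization functor and its properties; the paper's version is shorter because it recycles a diagram already built, but it only tells you that $(X,X_0)$ is proper over the a priori less canonical pair $(A^h,I^h)$. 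Both are valid proofs of the statement.
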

\begin{proof}
As $X$ is  proper over $Spec(A)$, it is a quasi-compact and quasi-separated scheme. Hence, by Lemma \ref{henselian pair associated to an henselian couple}, $(\mathscr{O}_X(X),ker(\mathscr{O}_X(X)\longrightarrow \mathscr{O}_{X_0}(X_0)))$ is an henselian pair. Therefore, by the same construction described in Lemma \ref{if the pair associated to a couple is henselian, the couple is henselian}, we get that $(X,X_0)$ is proper over $(A^h,I^h)$.
\end{proof}
\begin{corollary}\label{characterization of henselian couples proper over a noetherian ring}
Let $(X,X_0)$ be a couple and assume that $X$ is proper over a noetherian ring $A$ and that $X_0=X\times_{Spec(A)} Spec(A/I)$ for some ideal $I\subseteq A$. Then $(X,X_0)$ is an henselian couple if and only if $(\mathscr{O}_X(X),ker(\mathscr{O}_X(X)\longrightarrow \mathscr{O}_{X_0}(X_0)))$ is an henselian pair.
\end{corollary}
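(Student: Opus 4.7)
The plan is to observe that this corollary is essentially the amalgamation of Lemma \ref{if the pair associated to a couple is henselian, the couple is henselian} and Lemma \ref{henselian pair associated to an henselian couple}; the two directions follow by citing these in turn, after checking that their hypotheses are automatic in the present setting.

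For the reverse implication, suppose $(B,J) = (\mathscr{O}_X(X),\ker(\mathscr{O}_X(X)\to \mathscr{O}_{X_0}(X_0)))$ is henselian. The scheme $X$ is proper over the noetherian ring $A$ and $X_0 = X\times_{\mathrm{Spec}(A)} \mathrm{Spec}(A/I)$, so the hypotheses of Lemma \ref{if the pair associated to a couple is henselian, the couple is henselian} are exactly satisfied, and that lemma concludes that $(X,X_0)$ is an henselian couple.

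For the forward implication, suppose $(X,X_0)$ is an henselian couple. Since $X$ is proper over the affine scheme $\mathrm{Spec}(A)$, it is quasi-compact and quasi-separated (properness implies both, via separatedness and being of finite type over an affine base). Thus the hypotheses of Lemma \ref{henselian pair associated to an henselian couple} hold for the closed immersion $X_0 \hookrightarrow X$, and that lemma yields that $(B,J)$ is an henselian pair.

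The only substantive step is the verification of quasi-compactness and quasi-separatedness for the forward direction; everything else is simply quoting the preceding two lemmas. There is no genuine obstacle, since the corollary was set up precisely so that both directions are immediate from the preparatory results.
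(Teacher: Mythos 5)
Your proof is correct and is exactly the argument the paper intends: the corollary is stated without proof precisely because it is the conjunction of Lemma \ref{if the pair associated to a couple is henselian, the couple is henselian} (for the ``if'' direction) and Lemma \ref{henselian pair associated to an henselian couple} (for the ``only if'' direction, after noting that properness over an affine scheme gives quasi-compactness and quasi-separatedness). Nothing further is needed.
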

  


By Remark \ref{first step in the solution} every henselian couple $(X,X_0)$ which arises as in Lemma \ref{a couple which is proper over a noetherian henselian pair is henselian} satisfies conditions $\textit{2.}$ and $\textit{3.}$ in \cite[Exposé XII, Proposition 6.5]{SGA4} with $\mathbb{L}=\mathbb{P}$ and for every $n$. 
Then, applying Corollary \ref{an henselian couple proper over a noetherian ring is proper over an henselian pair}, we get the following result:
\begin{theorem}
Let $(X,X_0)$ be an henselian couple. Assume that $X$ is proper over a noetherian ring $A$ and that $X_0=X\times_{Spec(A)} Spec(A/I)$ for some ideal $I\subseteq A$. Then conditions $\textit{2.}$ and $\textit{3.}$ in \textnormal{\cite[Exp. XII, Remarks 6.13]{SGA4}} are satisfied with $\mathbb{L}=\mathbb{P}$ and for every $n$.
\end{theorem}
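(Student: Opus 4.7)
The plan is to reduce the statement to the setting already treated in Remark \ref{first step in the solution}, where Theorem \ref{proper base change over henselian pairs} combined with \cite[Corollary 1]{Gab} answers Question \ref{conj1} affirmatively. Once $(X,X_0)$ is exhibited as proper and of finite presentation over an henselian \emph{pair}, there is nothing more to prove.

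The first step is to invoke Corollary \ref{an henselian couple proper over a noetherian ring is proper over an henselian pair}. Under the hypotheses of the theorem it produces, via the henselisation $(A^h,I^h)$ of $(A,I)$, a proper morphism $\phi\colon X\longrightarrow \mathrm{Spec}(A^h)$ satisfying $X_0=X\times_{\mathrm{Spec}(A^h)}\mathrm{Spec}(A^h/I^h)$. The next step is a finite-presentation check: since $A$ is noetherian, so is $A^h$ by \cite[Tag 0AGV]{SP}, hence any scheme proper over $\mathrm{Spec}(A^h)$ is automatically of finite type, and therefore of finite presentation over it. Consequently $(X,X_0)$ is an henselian couple which is proper and of finite presentation over the henselian pair $(A^h,I^h)$.

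Having arranged this, I would conclude by directly invoking Remark \ref{first step in the solution}: both the equivalence of categories of finite étale coverings (via Theorem \ref{proper base change over henselian pairs}) and the cohomological isomorphism for every torsion étale sheaf and every $n$ (via \cite[Corollary 1]{Gab} combined with proper base change) hold for $(X,X_0)$. These are exactly conditions 2.\ and 3.\ of \cite[Exp. XII, Proposition 6.5]{SGA4} with $\mathbb{L}=\mathbb{P}$.

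The real work has already been done in the preceding sections: the construction of Lemmas \ref{if the pair associated to a couple is henselian, the couple is henselian} and \ref{henselian pair associated to an henselian couple}, which realises an arbitrary henselian couple proper over a noetherian affine base as a couple over a genuine henselian pair, is the subtle point; together with the generalised Artin approximation provided by Theorem \ref{second answer to Artin's first question}, it makes the final theorem a direct corollary, so no additional obstacle arises at this stage.
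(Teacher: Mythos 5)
Your proposal is correct and follows essentially the same route as the paper: apply Corollary \ref{an henselian couple proper over a noetherian ring is proper over an henselian pair} to realise $(X,X_0)$ as proper over the noetherian henselian pair $(A^h,I^h)$, and then conclude via Remark \ref{first step in the solution} (Theorem \ref{proper base change over henselian pairs} together with Gabber's affine analogue of proper base change). Your explicit remark that properness over the noetherian ring $A^h$ yields finite presentation is a detail the paper leaves implicit, but it is exactly the right justification.
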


This gives a positive answer to Question \ref{conj1} if we assume that hypothesis $(\dagger)$ hold.

\textbf{Acknowledgments.} A special thank you to Moritz Kerz. It is worthy to mention that he introduced me to the problem treated in this paper. In particular, I would like to point out that he mentioned Popescu's Theorem to me, which I did not know until then, grasping the fact that it could have been an helpful tool for my purposes. I also wish to thank him for the time he dedicated to the review of this paper.\\
I also wish to thank Federico Binda for the many interesting discussions I had with him and for his precious advices.

\end{document}